\DeclareMathOperator{\ex}{ex}
\DeclareMathOperator{\Aut}{Aut}
\let\epsilon\varepsilon
\newtheorem{theorem}{Theorem}
\newtheorem{conjecture}{Conjecture}
\newtheorem*{observation*}{Observation}
\newcommand{\floor}[1]{\left\lfloor{#1}\right\rfloor}
\newcommand{\abs}[1]{\left\lvert{#1}\right\rvert}
\title{Subgraph densities in $K_r$-free graphs}
\author[1]{Andrzej Grzesik}
\author[2]{Ervin Győri}
\author[2,3]{Nika Salia}
\author[2]{Casey Tompkins}
\affil[1]{Jagiellonian University.}
\affil[2]{Alfr\'ed R\'enyi Institute of Mathematics, Hungarian Academy of Sciences. }
\affil[3]{Asian University for Women. }
\begin{document}

\maketitle

\begin{abstract}
    In this paper we disprove a conjecture of Lidický and Murphy about the number of copies of a given graph in a $K_r$-free graph and give an alternative general conjecture.  We also prove an asymptotically tight bound on the number of copies of any bipartite graph of radius at most~$2$ in a triangle-free graph.
\end{abstract}

\section{Introduction}

For graphs $H$ and $F$ the generalized Tur\'an number $\ex(n,H,F)$ is defined to be the maximum number of (not necessarily induced) copies of $H$ in an $n$-vertex graph $G$ which does not contain $F$ as a subgraph. 
Estimating $\ex(n,H,F)$ for various pairs $H$ and $F$ has been a central topic of research in extremal combinatorics. The case when $H$ and $F$ are both cliques was settled early on by Zykov~\cite{zykov1949some} and independently  by Erd\H{o}s~\cite{erdos1962number}. 
The problem of maximizing $5$-cycles in a triangle-free graph was a long-standing open problem. The problem was finally settled by Grzesik~\cite{grzesik2012maximum} and independently by Hatami, Hladk\'y, Kr\'al, Norine and Razborov~\cite{hatami2013number}. 
In the case when the forbidden graph $F$ is a triangle and $H$ is any bipartite graph containing a  matching on all but at most one of its vertices,  $\ex(n,H,F)$ was determined exactly by Gy\H{o}ri, Pach and Simonovits~\cite{gyori1991maximal} in 1991. 
More recently there has been extensive work on the topic following the work of Alon and Shikhelman~\cite{alon2016many}, who introduced the extremal function $\ex(n,H,F)$ for general pairs $H$ and $F$.  

We now introduce some further notation which we will require in the statements and proofs of our main results.  
For a graph $G$, the vertex set of $G$ is denoted by $V(G)$ and the edge set of $G$ is denoted by $E(G)$.  
We also write $v(G) = |V(G)|$ and $e(G) = |E(G)|$. 
We denote the path, cycle and complete graph on $r$ vertices by $P_r$, $C_r$ and $K_r$, respectively.
The complete multipartite graph with $r \geq 2$ parts of sizes $n_1,n_2,\dots,n_r$ is denoted by $K_{n_1,n_2,\dots,n_r}$.
In the case when each $n_i$ differs by at most one from the others the $n$-vertex graph is referred to as the Tur\'an graph and is denoted by $T_r(n)$.
For a graph $H$, the $k^{\text{th}}$ power of $H$, denoted $H^k$, is defined to be the graph with vertex set $V(H)$ with an edge between every vertex of distance at most $k$ in $H$. 
For graphs $G$ and $H$, the number of labelled copies of $H$ in $G$ is denoted by $H^*(G)$ and the number of unlabelled copies of $H$ in $G$ is denoted by $H(G)$. 
In particular we we have that $H^*(G)/H(G) = |\Aut(H)|$ where $\Aut(H)$ is the set of automorphisms of $H$. 

Recently Lidický and Murphy proposed the following natural conjecture.

\begin{conjecture}[Lidický, Murphy~\cite{lidicky2021maximizing}]\label{conj:Lid_Mer}
Let $H$ be a graph and let $r$ be an integer such that $r>\chi(H)$. 
Then there exist integers $n_1,n_2,\dots,n_{r-1}$ such that $n_1+n_2+\dots+n_{r-1}=n$ and we have 
\[
\ex(n,H,K_r)=H(K_{n_1,n_2,\dots,n_{r-1}}).
\]
\end{conjecture}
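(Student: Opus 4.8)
The plan is to attack the conjecture by a Zykov-type symmetrization, reducing an arbitrary extremal graph to a complete multipartite one. The lower bound $\ex(n,H,K_r)\ge \max H(K_{n_1,\dots,n_{r-1}})$ is immediate: every complete $(r-1)$-partite graph is $K_r$-free (it contains $K_{r-1}$ but no $K_r$), and since $r-1\ge\chi(H)$ it contains copies of $H$. So the whole content is the matching upper bound. Fix $n$, let $G$ be an $n$-vertex $K_r$-free graph achieving $H(G)=\ex(n,H,K_r)$, and try to show that among all such extremal graphs there is one whose non-adjacency relation is transitive, i.e.\ which is complete multipartite. Once this is done, note that the complete multipartite $K_r$-free graphs are exactly the complete $k$-partite graphs with $k\le r-1$, so $H(G)=H(K_{n_1,\dots,n_{r-1}})$ for a suitable composition $n_1+\dots+n_{r-1}=n$ (padding with empty parts if $k<r-1$), and we are done.

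For the symmetrization step, suppose $u,v\in V(G)$ are non-adjacent but do not have equal neighborhoods. Form $G_u$ by deleting every edge at $v$ and joining $v$ to $N_G(u)$, and symmetrically form $G_v$; in each graph $u$ and $v$ are non-adjacent twins, and each graph is still $K_r$-free, since a clique through the rewired vertex projects to an equally large clique through the old vertex and no clique uses a pair of non-adjacent twins. Classifying copies of $H$ in $G$ (and likewise in $G_u$, $G_v$) according to whether they meet $\{u,v\}$ in $u$ only, in $v$ only, in both, or in neither, a short computation (using that $G_u-u\cong G-v$) gives $H(G)=A+B+C+D$, $H(G_u)=A+2B+D_u$, and $H(G_v)=A+2C+D_v$, where $A$ counts copies avoiding $\{u,v\}$, $B$ and $C$ count copies using only $u$ and only $v$, and $D,D_u,D_v$ count copies using both vertices in the respective graphs. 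One would then want $\max\{H(G_u),H(G_v)\}\ge H(G)$ and would iterate over non-adjacent non-twin pairs, tracking a suitable potential (the number of twin-classes, say) to guarantee that the process terminates at a complete multipartite graph without decreasing $H$.

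The crux is exactly the inequality $\max\{H(G_u),H(G_v)\}\ge H(G)$: can twinning destroy copies of $H$? When $H$ is a clique the answer is no — a clique cannot contain two non-adjacent vertices, so $D=D_u=D_v=0$ and the inequality reduces to $\max\{2B,2C\}\ge B+C$, which is the classical Zykov--Erd\H{o}s argument. For general $H$ the term $D$ is controlled by pairs of \emph{non-adjacent} vertices $a,b$ of $H$ mapped to $u$ and $v$, which forces $N_H(a)$ to embed into $N_G(u)$ and $N_H(b)$ into $N_G(v)$ \emph{separately}, whereas in $G_u$ both of these neighborhoods are squeezed into the single set $N_G(u)$; this can strictly lower the count, so the naive symmetrization need not go through. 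This is therefore the place where the statement should be stress-tested — and, given that the abstract announces the conjecture to be false, presumably where a genuine obstruction and a counterexample live, rather than a proof. A more robust variant would pass to graphons, extract a measurable extremal object, and run the symmetrization there (replacing "$v$ becomes a twin of $u$'' by a redistribution of mass), but the same local exchange inequality for the $H$-density functional over $K_r$-free graphons is the obstacle and fails for the same reason.
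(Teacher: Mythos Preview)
The statement you are trying to prove is a conjecture that the paper \emph{disproves}; there is no proof of it in the paper, and indeed none can exist. What the paper supplies is an explicit counterexample for every $r\ge 3$: take $H$ to be the $(r-2)$-nd power of $P_{2r}$ with each of the two degree-$(r-2)$ endpoints replaced by a large independent set of clones, and take $G$ to be a suitably unbalanced blow-up of $C_{2r-1}^{r-2}$. Because $H$ has a unique $(r-1)$-coloring in which the two cloned ends land in different classes, any complete $(r-1)$-partite host forces those two large blocks of clones into different parts (so their sizes are each at most $n/2$), whereas in the blow-up of $C_{2r-1}^{r-2}$ both blocks can be mapped into the same huge part. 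A short calculation then shows $H(G)>H(K_{n_1,\dots,n_{r-1}})$ for any composition $n_1+\dots+n_{r-1}=n$.

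You have correctly located the failure point of the Zykov symmetrization: the inequality $\max\{H(G_u),H(G_v)\}\ge H(G)$ is simply false for general $H$, precisely because the ``both'' term $D$ (copies of $H$ using both $u$ and $v$ at a non-adjacent pair of $H$) can drop when the two neighbourhoods $N_G(u)$, $N_G(v)$ are collapsed into one. The paper's counterexample is exactly a global manifestation of this local obstruction: the blow-up of $C_{2r-1}^{r-2}$ has many non-adjacent vertex pairs with genuinely different neighbourhoods, and twinning any such pair destroys the embeddings that route the two cloned ends of $H$ into the same part. Your closing remark that ``this is where a counterexample lives rather than a proof'' is therefore the right conclusion; what is missing from your write-up is the construction itself, which is the actual content of the paper's treatment of this statement.
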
 

Using the graph removal lemma one can easily show that for any graphs $H$ and $F$ with $\chi(F) = r$ we have $\ex(n,H,F) \leq \ex(n,H,K_r) + o(n^{v(H)})$~(see~\cite{gerbner2019counting}). 
Therefore, the above conjecture asymptotically determines $\ex(n,H,F)$ in the case $\chi(F)> \chi(H)$, which shows its importance. 
Unfortunately, the conjecture is not true in general. Indeed a counterexample when $r=3$ already appears in~\cite{gyori1991maximal}. Here we give a counterexample for arbitrary~$r$. 

\begin{theorem}
For every $r \geq 3$ there is a counterexample to Conjecture~\ref{conj:Lid_Mer}.
\end{theorem}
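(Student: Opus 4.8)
The plan is as follows. To refute Conjecture~\ref{conj:Lid_Mer} for a given $r$ it is enough to produce one graph $H$ with $\chi(H)<r$ together with, for infinitely many $n$, an $n$-vertex $K_r$-free graph $G$ satisfying
\[
H(G)\;>\;H\big(K_{n_1,n_2,\dots,n_{r-1}}\big)\qquad\text{for all }n_1+n_2+\dots+n_{r-1}=n .
\]
So the task is to design $H$ and $G$ so that $G$ --- which is allowed to be $K_r$-free but not complete multipartite --- strictly beats every complete $(r-1)$-partite host. I would look for a $G$ whose non-multipartite behaviour is confined to a small, well understood gadget, and for an $H$ tailored to exploit exactly that gadget.

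Concretely, I would lift the $r=3$ situation. By the classical fact alluded to above (cf.~\cite{gyori1991maximal}) there is a bipartite graph $H_0$ and a triangle-free graph $G_0$ on $n_0$ vertices with $H_0(G_0)>H_0(K_{a,\,n_0-a})$ for every $a$, i.e.\ a bipartite target whose triangle-free optimiser is genuinely not a complete bipartite graph. For general $r\ge 3$ set
\[
H:=K_{r-3}\vee H_0,\qquad G:=\big(\overline{K_t}\vee\overline{K_t}\vee\dots\vee\overline{K_t}\big)\vee \widehat G_0,
\]
where the first join has $r-3$ independent sets $\overline{K_t}$ of size $t$ (so their join is a complete $(r-3)$-partite Tur\'an-type graph) and $\widehat G_0$ is a balanced blow-up of $G_0$ on $n-(r-3)t$ vertices, with $t$ chosen to make the blow-up classes almost equal. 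Then $\chi(H)=(r-3)+\chi(H_0)=r-1<r$ and $\omega(G)=(r-3)+\omega(\widehat G_0)=(r-3)+2=r-1<r$, so $G$ is $K_r$-free and $H$ is an admissible target; for $r=3$ this simply recovers $(H_0,G_0)$ up to blow-up.

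The counting step compares the two sides of the displayed inequality. A copy of $H=K_{r-3}\vee H_0$ in any host $F$ consists of a distinguished $(r-3)$-clique (the vertices of $H$ of degree $v(H)-1$, assuming $H_0$ has no universal vertex) together with a copy of $H_0$ inside the common neighbourhood of that clique. In $G$ one may take the clique to be one vertex from each of the $r-3$ Tur\'an classes; its common neighbourhood is exactly $\widehat G_0$, so $G$ contains at least $\,\sim t^{\,r-3}\,H_0(\widehat G_0)$ copies of $H$, other placements of the clique (which must use at most two vertices inside the triangle-free $\widehat G_0$) contributing genuinely fewer. In a complete $(r-1)$-partite host $K_{n_1,\dots,n_{r-1}}$ the distinguished clique must occupy $r-3$ of the $r-1$ parts, and the copy of $H_0$ then lies in the complete bipartite graph spanned by the remaining two parts; summing over the choice of parts and optimising the $n_i$ bounds $H\big(K_{n_1,\dots,n_{r-1}}\big)$ by a product of the shape ``(best placement of a $K_{r-3}$)$\,\times\,$(maximum number of copies of $H_0$ in a complete bipartite graph on the leftover vertices)''. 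Matching $\widehat G_0$ to the optimal leftover sizes and using $H_0(G_0)>H_0(K_{a,\,n_0-a})$ then transports the base-case strict inequality up to level $r$.

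The main obstacle is twofold. One has to commit to an explicit base pair $(H_0,G_0)$ and understand its extremal triangle-free configuration well enough to run the counting --- in practice it is cleaner to take a concrete small $H_0$ and compute both sides by hand than to invoke a black box. More seriously, the reduction above is not automatic: a copy of $H$ in a complete $(r-1)$-partite graph need not respect the ``clique plus $H_0$'' split by colour classes --- even a connected $H_0$ may scatter its two bipartition classes over more than two parts --- so one must show that such mixed embeddings are never more plentiful than the split ones, and simultaneously choose $\widehat G_0$ so as to dominate \emph{every} choice of $n_1,\dots,n_{r-1}$ at once. This is a finite convexity/optimisation argument over the part sizes, and extracting a strict inequality uniformly in them is the delicate point. (An alternative that avoids the lifting is to write down one fully explicit pair $(H,G)$ valid for all $r$ and verify the two inequalities directly; the architecture --- bound $H(G)$ from below, bound $H$ over all complete $(r-1)$-partite graphs from above via an optimisation, and compare --- would be unchanged.)
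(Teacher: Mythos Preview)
Your proposal is a plan rather than a proof: you yourself flag the ``delicate point'' and do not resolve it.  Let me separate what is right, what is an unnecessary worry, and what the genuine gap is --- and then contrast with the paper's route.

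\textbf{What is right.}  The join construction $H=K_{r-3}\vee H_0$, $G=(\overline{K_t})^{\vee(r-3)}\vee\widehat{G_0}$ does produce an $(r-1)$-chromatic $H$ and a $K_r$-free $G$, and your structural claim about the multipartite side is in fact clean: in any embedding of $H$ into $K_{n_1,\dots,n_{r-1}}$ the $K_{r-3}$ occupies $r-3$ distinct parts and every vertex of $H_0$ (being adjacent in $H$ to all of $K_{r-3}$) lands in the union of the remaining two parts.  So
\[
H^*\!\big(K_{n_1,\dots,n_{r-1}}\big)\;=\;(r-3)!\sum_{\{j,k\}\in\binom{[r-1]}{2}}\Big(\prod_{i\notin\{j,k\}}n_i\Big)\,H_0^*\!\big(K_{n_j,n_k}\big),
\]
and for connected $H_0$ there is no ``scattering'': each copy of $H_0$ sits honestly inside one complete bipartite graph.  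Your second stated obstacle is therefore not a real obstacle.

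\textbf{The actual gap.}  What is missing is the quantitative comparison.  Bounding the sum above crudely by $\binom{r-1}{2}$ times its largest term and comparing with $H^*(G)\ge (r-3)!\,t^{r-3}H_0^*(\widehat G_0)$, one sees that the base-case advantage $c:=H_0^*(\widehat G_0)\big/\max_a H_0^*(K_{a,m-a})$ must overcome a factor that grows with $r$.  A single fixed pair $(H_0,G_0)$ with a fixed $c>1$ is therefore not obviously enough; at the very least one must either (i) argue that the $\binom{r-1}{2}$ terms cannot simultaneously be near-maximal so that the crude bound is far from tight, or (ii) let the parameters of $H_0$ depend on $r$ so that $c$ can be made arbitrarily large.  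Either is plausible, but neither is carried out, and this is exactly the optimisation over all $(n_1,\dots,n_{r-1})$ that you defer.

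\textbf{How the paper proceeds instead.}  The paper avoids lifting altogether and writes down, for each $r$, an explicit pair.  The graph $H$ is the $(r-2)$nd power of $P_{2r}$ with its two end-vertices replaced by independent sets of size $a$; the host $G$ is an unbalanced blow-up of $C_{2r-1}^{\,r-2}$ with one huge part and $2r-2$ tiny parts.  The crucial property is that $H$ has a \emph{unique} $(r-1)$-colouring, and in it the two blown-up ends lie in different colour classes; hence in any $K_{n_1,\dots,n_{r-1}}$ those $2a$ vertices contribute at most $(n/2)^{2a}$, while in $G$ both end-sets can be placed in the single huge part, contributing roughly $n^{2a}$.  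Choosing $a$ large makes this gap swamp all polynomial factors, and the upper bound over \emph{all} partitions $(n_1,\dots,n_{r-1})$ becomes a one-line estimate rather than an optimisation problem.  This is precisely the ``fully explicit pair valid for all $r$'' alternative you mention in your last parenthetical, and it is what makes the argument go through.
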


\begin{proof}
First we fix some constants later used for constructing a counterexample. 
Let $\epsilon$ be a positive real number such that $\epsilon <\frac{1}{4r}$. 
Take a positive integer $a$ for which 
\[
2\epsilon ^{2r-2}(1-(2r-2)\epsilon)^{2a}>\frac{1}{2^{2a}}.
\]

\begin{figure}[ht]
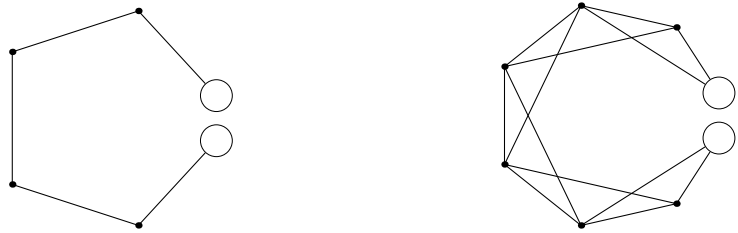

{\hfill\includegraphics{figure-2.mps}
\hfill\includegraphics{figure-4.mps}\hfill}
\caption{Graph $H$ for $r=3$ and $r=4$.}\label{fig:graphH}
\end{figure}

Let $H$ be the graph, depicted in Figure~\ref{fig:graphH}, obtained from $P^{r-2}_{2r}$ by replacing each of the two vertices of degree~$r-2$ with independent sets of size $a$ each with the same neighborhood as the original vertex.
We refer to these $a$ vertices as copies of the terminal vertex. 
Note that there is a unique $(r-1)$-coloring of $H$, and the copies of different terminal vertices are in different color classes.
For integers $n, n_1,n_2,\dots, n_{r-1}$ such that $n=n_1+n_2+\dots+n_{r-1}$, we have
\[
H(K_{n_1,n_2,\dots,n_{r-1}})=\frac{1}{|\Aut(H)|}\cdot H^*(K_{n_1,n_2,\dots,n_{r-1}})\leq n^{2r-2}\left(\frac{n}{2}\right)^{2a}.
\]

\begin{figure}[ht]
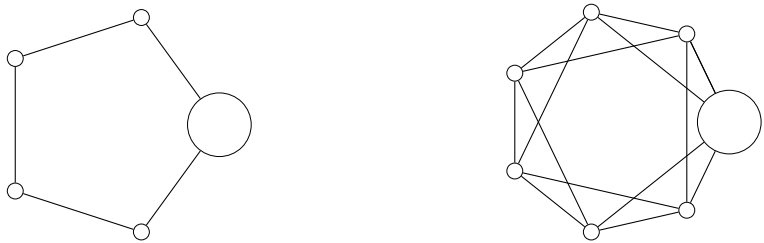

{\hfill\includegraphics{figure-3.mps}
\hfill\includegraphics{figure-5.mps}\hfill}
\caption{Graph $G$ for $r=3$ and $r=4$.}\label{fig:graphG}
\end{figure}

Let $G$ be a graph, depicted in Figure~\ref{fig:graphG}, obtained from blowing up $C_{2r-1}^{r-2}$ in the following way. 
We replace each vertex with a disjoint independent set of size $\floor{\epsilon n}$ except for one vertex which we replace by an independent set $A$ of size $n-(2r-2)\floor{\epsilon n}$. 
Note that $G$ is an $n$-vertex graph and the number of labelled copies of $H$ in $G$ is at least 
\[
2(\floor{\epsilon n})^{2r-2}(n-(2r-2)\floor{\epsilon n})^{2a}+o(n^{2r+2a-2}).
\]
Recall by the choice of $a$ we have 
\[
2(\floor{\epsilon n})^{2r-2}(n-(2r-2)\floor{\epsilon n})^{2a}+o(n^{2r+2a-2}) >n^{2r-2}\left(\frac{n}{2}\right)^{2a}
\]
for large enough $n$.  
Therefore for sufficiently large $n$ the number of labelled copies as well as unlabelled copies of $H$ in $G$ is greater than the number in any $n$ vertex $(r-1)$-partite graph. 
\end{proof}

It may be natural to expect that for $r=3$ the blow-up of a pentagon is the only obstacle, in particular that for every bipartite graph $H$, $\ex(n,H,K_3)$ is asymptotically achieved by either a blow-up of an edge (that is, a complete bipartite graph) or a blow-up of a cycle of length five. 
Surprisingly this is not the case.
Here we give an intuitive sketch of the argument. 
Let $H$ be the first graph depicted in Figure~\ref{fig:C5example} defined in the following way. 
We take a path on $10$ vertices $v_1,v_2,\dots,v_{10}$, and let $A_2$ and $A_9$ be big sets of independent vertices attached to $v_2$ and $v_9$, accordingly. 
Let $B_1$, $B_4$, $B_7$ and $B_{10}$ be huge sets of independent vertices attached to the vertices $v_1$, $v_4$, $v_7$ and $v_{10}$, accordingly. 
If one wants to maximize the number of copies of $H$ in a complete bipartite graph, then the huge independent sets $B_1$, $B_7$ will be mapped into one color class and the huge independent sets $B_4$ and $B_{10}$ will be mapped into the other color class.
If one wants to maximize the number of copies of $H$ in a blow-up of a pentagon, then the huge sets $B_1$, $B_4$, $B_7$ and $B_{10}$ can be mapped into the same part of the blow-up of the pentagon, but the remaining two big sets $A_2$ and $A_9$ need to go to distinct parts, which are also different from the parts into which the huge independent sets are mapped. 
On the other hand, when one counts the number of copies of $H$ in the  graph depicted on the right in Figure~\ref{fig:C5example}, then all sets $B_1$, $B_4$, $B_7$ and $B_{10}$ can be mapped into one huge part and both of the sets $A_2$ and $A_9$ can be mapped into one big part. Therefore, after fixing the sizes of independent sets to appropriate values, the maximum number of copies of $H$ in a triangle-free graph will be achieved neither in a complete bipartite graph nor in a blow-up of a pentagon.

\begin{figure}[ht]
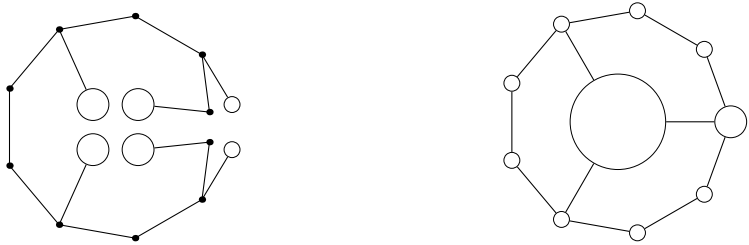

{\hfill\includegraphics{figure-0.mps}
\hfill\includegraphics{figure-1.mps}\hfill}
\caption{The graph $H$ is depicted on the left, and the structure of a graph with more copies of $H$ than a blow-up of an edge or $C_5$ is depicted on the right.}\label{fig:C5example}
\end{figure}

The main idea behind the counterexample we presented to Conjecture~\ref{conj:Lid_Mer} is to have a graph with many vertices that cannot have the same color in any two-coloring, but can be in the same part in a blow-up of a non-bipartite graph. 
One can avoid having such vertices by bounding the diameter of a graph, therefore it is natural to consider the following problem instead of Conjecture~\ref{conj:Lid_Mer}.
\begin{conjecture}\label{Conj:An_G_S_T}
Let $G$ be a graph with diameter at most $2r-2$ with $\chi(G)<r$, then $\ex(n,G,K_{r})$ is asymptotically achieved by a blow-up of $K_{r-1}$.
\end{conjecture}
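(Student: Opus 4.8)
The plan is to prove the case $r=3$ of the statement, under the (strictly stronger, but natural) hypothesis $\operatorname{rad}(H)\le 2$ in place of $\operatorname{diam}(H)\le 2r-2$; this is the bound announced in the abstract: if $H$ is bipartite with $\operatorname{rad}(H)\le 2$ then $\ex(n,H,K_3)=(1+o(1))\max_{n_1+n_2=n}H(K_{n_1,n_2})$. (Note $\operatorname{rad}(H)\le 2$ forces $H$ connected and implies $\operatorname{diam}(H)\le 4$; the graph $C_8$ shows it is strictly stronger, and it is exactly the hypothesis the ``centred'' counting below needs.) Fix a center $c$ of $H$, set $N_1:=N_H(c)$, and let $N_2$ be the set of vertices at distance exactly $2$ from $c$; since $\operatorname{rad}(H)\le 2$, every $w\in N_2$ has a nonempty neighbourhood $S_w:=N_H(w)\subseteq N_1$. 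Because $H$ is bipartite and $c$ is joined to all of $N_1$, the colour classes of $H$ are $P:=N_1$ and $Q:=\{c\}\cup N_2$, and every edge of $H$ runs between them. The \emph{lower bound} is immediate: $K_{n_1,n_2}$ is triangle-free and, $H$ having exactly two proper $2$-colourings, contains $(1+o(1))\bigl(n_1^{|P|}n_2^{|Q|}+n_1^{|Q|}n_2^{|P|}\bigr)$ labelled copies of $H$, so optimising over $n_1+n_2=n$ gives what is needed.

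For the \emph{upper bound} the first step --- which I expect to be routine --- is localisation. If $\phi$ is a copy of $H$ in a triangle-free graph $G$ and $v:=\phi(c)$, then $\phi(N_1)\subseteq N_G(v)$, an independent set, and hence each $\phi(w)$ $(w\in N_2)$, being adjacent to the nonempty set $\phi(S_w)\subseteq N_G(v)$, lies at distance exactly $2$ from $v$. Since $N_2=Q\setminus\{c\}$ is independent in $H$, no edge inside the second neighbourhood $N_2(v)$ is used, so $\phi|_{H-c}$ is precisely an injective homomorphism of the bipartite graph $H-c$ (parts $N_1,N_2$) into the bipartite graph $B_v$ of edges between $N_G(v)$ and $N_2(v)$ with $N_1\mapsto N_G(v)$; conversely any such homomorphism extends uniquely by $c\mapsto v$. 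Therefore
\[
H^*(G)=\sum_{v\in V(G)}\#\bigl\{\text{injective homomorphisms }H-c\to B_v,\ N_1\mapsto N_G(v)\bigr\}.
\]
Estimating each summand by first choosing the images of $N_2$ (a vertex of $N_1$ isolated in $H-c$ has at most $d_G(v)$ images, while for each remaining $x\in N_1$ one fixes a neighbour $\tau(x)\in N_2$ and uses $\phi(x)\in N_G(v)\cap N_G(\phi(\tau(x)))$) and collecting by the fibres of $\tau$ gives
\[
H^*(G)\le\sum_{v\in V(G)} d_G(v)^{\,a}\prod_{w\in N_2}\Bigl(\sum_{z\in V(G)}\bigl|N_G(v)\cap N_G(z)\bigr|^{m_w}\Bigr)
\]
for suitable non-negative integers $a$ and $(m_w)_{w\in N_2}$ with $a+\sum_w m_w=|N_1|$. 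One checks this bound is \emph{asymptotically tight for $G=K_{n_1,n_2}$}, reproducing $n_1^{|Q|}n_2^{|P|}+n_1^{|P|}n_2^{|Q|}$, so the problem reduces to maximising the right-hand side over triangle-free $n$-vertex graphs.

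This maximisation is the \textbf{main obstacle}. The naive per-vertex estimate $\#\{\text{inj.\ hom.}\ H-c\to B_v\}\le d_G(v)^{|N_1|}(n-d_G(v))^{|N_2|}$ (pretending $B_v$ is complete bipartite) is too weak by a constant factor: summed over $v$ it is at most $n\cdot\max_t t^{|N_1|}(n-t)^{|N_2|}$, and $t^{|N_1|}(n-t)^{|N_2|}$ peaks at $t=\tfrac{|N_1|}{|N_1|+|N_2|}n$, generally not at the balance forced by the optimal $K_{n_1,n_2}$ --- whereas a triangle-free graph cannot have all of its radius-$2$ balls simultaneously resemble complete bipartite graphs with the ``wrong'' balance. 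So triangle-freeness must be used globally. I would resolve this by a stability argument: a connected $H$ on $v(H)$ vertices has at most $2e(G)\cdot\Delta(G)^{v(H)-2}$ copies in $G$, so only $G$ with $e(G)=\Omega(n^2)$ matter; among these, the ``spread-out'' triangle-free graphs (blow-ups of odd cycles, and the constructions used above to refute Conjecture~\ref{conj:Lid_Mer}) have few copies of $H$ precisely because $\operatorname{rad}(H)\le 2$ confines each copy to a radius-$2$ ball; one then shows that a triangle-free $G$ whose $H$-count is within $o(n^{v(H)})$ of optimal must be $o(n^2)$-close in edit distance to some $K_{n_1,n_2}$ (via the stability form of Mantel's theorem together with the codegree estimates above), and a counting lemma gives $H^*(G)=H^*(K_{n_1,n_2})+o(n^{v(H)})$. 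A short calculus computation identifies $\max_{n_1+n_2=n}H^*(K_{n_1,n_2})$ and closes the argument. The genuinely hard part is this quantitative stability --- proving that any deviation of $G$ from a complete bipartite graph strictly loses a constant factor in the number of copies of a radius-$\le 2$ bipartite $H$; everything preceding it is bookkeeping.
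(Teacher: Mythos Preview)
The statement is a conjecture; the paper does not prove it in general but establishes the special case $r=3$ under the hypothesis $\operatorname{rad}(H)\le 2$, which is exactly what you target. Your proposal, however, leaves the decisive step open. You reduce to maximising a codegree-type expression $\sum_v d(v)^a\prod_w\sum_z|N(v)\cap N(z)|^{m_w}$ over triangle-free $G$ and then \emph{assert} that a stability argument finishes it. That step is the entire difficulty, and your sketch does not close it: having $e(G)=\Omega(n^2)$ is far from having $e(G)=(\tfrac14-o(1))n^2$, so the stability form of Mantel's theorem does not apply directly; you would first need to show that a near-extremal $G$ for the $H$-count has near-extremal edge count, and you give no mechanism for this. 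A smaller issue: your claim that the codegree bound is asymptotically tight on $K_{n_1,n_2}$ can fail depending on the choice of centre and of $\tau$. If some $m_w=0$ then the corresponding factor is $\sum_z 1=n$, which overshoots; for instance, take $H$ the tree on $\{c,x,y,w_1,w_2\}$ with edges $cx,cy,xw_1,xw_2$ and centre $c$, where your bound is off by a factor of~$2$.

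The paper's route is quite different and sidesteps stability entirely. It first observes that if $H'\subseteq H$ is a connected spanning subgraph and the result holds for $H'$, then it holds for $H$ (since $H^*(G)\le H'^*(G)$ for every $G$, while $H^*(K_{n_1,n_2})=H'^*(K_{n_1,n_2})+o(n^{v(H)})$). This reduces to the case where $H$ is a $K_{s,t}$ with only pendant leaves attached. The key step is then \emph{Muirhead's inequality}: writing $H^*(G)$ as a sum over embeddings of the central $K_{s,t}$ times symmetric products of degree powers for the pendants, Muirhead majorises the exponent vectors $(a_1,\dots,a_s)$ and $(b_1,\dots,b_t)$ by $(\sum a_i,0,\dots,0)$ and $(\sum b_j,0,\dots,0)$, yielding $H^*(G)\le F^*(G)+o(n^{v(H)})$ where $F$ is a \emph{double-star} on $v(H)$ vertices. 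The triangle-free maximum for double-stars is already known to be asymptotically complete bipartite (Gy\H{o}ri--Wang--Woolfson), so the proof ends there. The Muirhead reduction to a double-star is the idea your outline is missing; it replaces your unexecuted stability argument by a pointwise inequality plus a citation.
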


A first step towards Conjecture~\ref{Conj:An_G_S_T} for $r=3$ is to prove it for all bipartite graphs of radius~$2$. 
Each such graph can be viewed as a star with additional adjacent vertices. Here we prove a bit more general result, i.e., for bipartite graphs consisting of some complete bipartite graph and additional adjacent vertices.

\begin{theorem}
Let $H$ be a bipartite graph containing a subgraph $K$ isomorphic to $K_{s,t}$. Assume the distance of each vertex $v \in V(H)$ to $V(K)$ is at most one. Then the maximum number of copies of $H$ in a triangle-free $n$-vertex graph is obtained asymptotically by a complete bipartite graph.
\end{theorem}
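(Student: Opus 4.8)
Throughout write $p=|X|,q=|Y|$ for the colour classes of $H$ (which is connected, hence uniquely $2$‑colourable) and $v=v(H)=p+q$, and set $c_H:=\max_{x\in[0,1]}\bigl(x^{p}(1-x)^{q}+x^{q}(1-x)^{p}\bigr)$. The lower bound is immediate: $K_{\lceil xn\rceil,\,n-\lceil xn\rceil}$ is triangle‑free and contains $(1-o(1))\bigl(x^{p}(1-x)^{q}+x^{q}(1-x)^{p}\bigr)n^{v}$ labelled copies of $H$ (every copy sends one colour class into each side), so for optimal $x$ it realises $(c_H-o(1))n^{v}$. The plan is therefore to prove the matching upper bound $H^{*}(G)\le (c_H+o(1))n^{v}$ for every triangle‑free $n$‑vertex graph $G$.

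First I would reduce to a simpler target graph. Writing $V(H)=V(K)\cup W$ with $K\cong K_{s,t}$, pick for each $w\in W$ a neighbour $f(w)\in V(K)$ and let $H'$ be the spanning subgraph of $H$ with edge set $E(K)\cup\{wf(w):w\in W\}$ — a copy of $K_{s,t}$ with a pendant vertex hung from a core vertex for each $w$. Then $H'$ is connected and bipartite with the same classes $X,Y$, so $H'^{*}(K_{a,n-a})$ has the same leading term $a^{p}(n-a)^{q}+a^{q}(n-a)^{p}$ as $H^{*}(K_{a,n-a})$, while $H'\subseteq H$ gives $H^{*}(G)\le H'^{*}(G)$; hence it suffices to bound $H'^{*}(G)$. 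Now every copy of $H'$ restricts to a copy of $K_{s,t}$ on an ordered pair of independent sets $(u_{1},\dots,u_{s};v_{1},\dots,v_{t})$ with all $u_{i}v_{j}\in E(G)$, and is then completed by placing each pendant image inside the $G$‑neighbourhood of the image of its attachment point. Letting $\alpha_{j}$ (resp.\ $\beta_{i}$) be the number of pendants attached to the $j$‑th vertex of $T$ (resp.\ $i$‑th of $S$), so that $\sum_j\alpha_j=\alpha:=p-s$ and $\sum_i\beta_i=\beta:=q-t$, this gives
$$H'^{*}(G)\ \le\ \sum_{\substack{(u_{1},\dots,u_{s};\,v_{1},\dots,v_{t})\\ u_{i}v_{j}\in E(G)\ \text{for all }i,j}}\ \prod_{j=1}^{t}d_{G}(v_{j})^{\alpha_{j}}\ \prod_{i=1}^{s}d_{G}(u_{i})^{\beta_{i}}.$$

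When $G$ is bipartite this is easy to finish: any copy of $K_{s,t}$ must put $S$ into one part and $T$ into the other, and the neighbourhood of a vertex lies in the opposite part, so with part sizes $xn,(1-x)n$ the right‑hand side is at most $n^{v}\bigl(x^{p}(1-x)^{q}+x^{q}(1-x)^{p}\bigr)\le c_Hn^{v}$; this settles the theorem for bipartite, in particular complete bipartite, hosts. For an arbitrary triangle‑free $G$ I would use that adjacent $u,v$ satisfy $N_{G}(u)\cap N_{G}(v)=\emptyset$ (so $d_{G}(u)+d_{G}(v)\le n$) and that the common neighbourhood of any non‑empty vertex set is independent. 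If $e(G)$ is small a crude count of copies already suffices (each copy of $H'$ is pinned down by one core edge plus $v-2$ further vertices, giving $O(e(G)\,n^{v-2})$ copies). If $e(G)$ is within $o(n^{2})$ of $n^{2}/4$, the stability form of Mantel's theorem says $G$ differs from a balanced complete bipartite graph in $o(n^{2})$ edges, each of which lies in only $O(n^{v-2})$ copies of $H'$, so deleting them alters $H'^{*}(G)$ by $o(n^{v})$ and the bipartite estimate applies.

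The remaining — and, I expect, genuinely hard — regime is intermediate density, where stability gives no usable structure and the crude bound is far too lossy. Here the crux is a sharp bound on the number of copies of a star $K_{1,k}$ in a triangle‑free graph, equivalently on $\sum_{v}d_{G}(v)^{k}$: one needs that this is at most $\bigl(\max_{x}\bigl(x(1-x)^{k}+(1-x)x^{k}\bigr)+o(1)\bigr)n^{k+1}$, i.e.\ that it too is asymptotically maximised by a complete bipartite graph. This is the step I would expect to be the main obstacle, because it cannot follow from the per‑edge inequality $d_{G}(u)+d_{G}(v)\le n$ alone (that only yields the weaker constant $\tfrac{(k-1)^{k-1}}{k^{k}}$); it forces one to exploit triangle‑freeness more globally — for instance by reinforcing the concave‑envelope/Jensen estimate for $\sum_{v}d_{G}(v)\,(n-d_{G}(v))^{k-1}$ with extra structural input, or by passing to the graphon limit and showing an extremal triangle‑free graphon is supported on a bipartition. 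Granting this star bound, one substitutes it back through the product over the core in the displayed inequality — iteratively, bounding the contribution of the pendants on each side of the $K_{s,t}$ — and optimises the resulting degree‑product expression in $x$, which reproduces the constant $c_H$ and completes the proof.
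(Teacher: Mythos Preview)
Your reduction to a $K_{s,t}$ with pendants is correct and matches the paper's first move, but the density trichotomy that follows leaves a real gap. You yourself flag that the intermediate-density regime hinges on an unproved assertion --- that $\sum_{v} d_G(v)^{k}$ is asymptotically maximised among triangle-free graphs by a complete bipartite graph --- and rightly note that this does not follow from the per-edge inequality $d(u)+d(v)\le n$. Even granting that star bound, your plan to ``substitute it back through the product over the core \ldots iteratively'' is not a proof: the displayed sum couples the degrees of $s+t$ vertices through the $K_{s,t}$ constraint, and it is not clear how a bound on a single $\sum_v d(v)^{k}$ feeds into
\[
\sum_{(u_i;v_j)}\ \prod_j d(v_j)^{\alpha_j}\prod_i d(u_i)^{\beta_i}
\]
so as to recover the sharp constant $c_H$ rather than some weaker product of star constants. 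The sparse and stability cases are fine, but they do not touch the heart of the problem.

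The paper sidesteps the density split entirely. After the same pendant reduction it applies Muirhead's inequality twice --- once to the $S$-side exponents $(a_1,\dots,a_s)$ and once to the $T$-side exponents $(b_1,\dots,b_t)$, both majorised by $(\sum a_i,0,\dots,0)$ and $(\sum b_j,0,\dots,0)$ --- to concentrate all pendant weight onto a single vertex on each side. Together with $|X|\le d(x)$ for the common neighbourhood, this converts the count (up to $o(n^{v})$) into the labelled count of a \emph{double-star} $F$ with the same colour-class sizes as $H$, i.e.\ $H^{*}(G)\le F^{*}(G)+o(n^{v})$. That double-stars are asymptotically maximised by complete bipartite graphs is then quoted from Gy\H{o}ri, Wang and Woolfson, closing the argument in one stroke. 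The ``hard'' star estimate you isolated is precisely a special case of this cited double-star result; what your outline is missing is the Muirhead consolidation step that reduces the general $K_{s,t}$-with-pendants count to a single application of that lemma.
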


\begin{proof}
We start proof with a simple observation. 
Let us assume that the maximum number of copies of a connected graph $H'$ in a triangle-free $n$-vertex graph is obtained by a blow-up of an edge. 
Then for every bipartite graph $H$ such that $H'\subseteq H$ we have that the maximum number of copies of $H$ in a triangle-free $n$-vertex graph is obtained by a blow-up of an edge.
Therefore we may assume that  $H$ consists of a complete bipartite graph $K_{s,t}$ with color classes $S$ and $T$ and some pendant edges.
The number of pendant edges attached to the vertices of $S$ are denoted $a_1,a_2,\dots,a_s$ and the number of pendant edges attached to vertices of $T$ are denoted $b_1,b_2,\dots,b_t$. 

For a graph $H$, we estimate the number of labeled copies of $H$ in a graph $G$.
First we fix a set of size $s$ in $G$ say $\{x_1,x_2,\dots,x_s\}$, onto which we will map the color class $S$ of $H$. 
Let us denote the common neighborhood of $\{x_1,x_2,\dots,x_s\}$ in $G$ by $X=\bigcap_{i=1}^{s} N_{G}(x_i)$. 
In the estimates below the vertices $x_1,x_2,\dots,x_s$ are variables therefore the common neighborhood is another variable. 
%For simplicity of notation in the following formulas we write $X$ instead of $\cap_{i=1}^{s} N_{G}(x_i)$. 
After the set $\{x_1,x_2,\dots,x_s\}$ is chosen we choose a permutation $\sigma\in S_s$ to map vertices of $\{x_1,x_2,\dots,x_s\}$ to the vertices of $S$. 
Next we choose vertices $y_1,y_2,\dots,y_t\in X$ as representatives of $T$.
Finally we choose the endpoints of the pendant edges. We have
%The vertices of $x_1,\dots,x_s$ will be selected in two steps: first a set of $s$ vertices is chosen, and then we will take the vertices from this set in some order.  
\begin{equation}\label{Equation:H(G)bound}
H^*(G)=\sum_{\{x_1,\dots,x_s\} \subset V(G)} \left(\sum_{\sigma \in S_s} \prod_{i=1}^{s} d(x_{\sigma(i)})^{a_i} \right)
        \left(\sum_{y_1,\dots,y_t\in X}\prod_{j=1}^{t} d(y_j)^{b_j}\right)+o(n^{v(H)}).
\end{equation}
We use  Muirhead's inequality~\cite[Theorem 45]{hardy1952inequalities} to estimate both terms of the product above. 
For the degrees of $x_1,x_2,\dots,x_s$ since the sequence $(a_1,a_2,\dots,a_{s})$ is majorized by the sequence $(\sum_{i=1}^{s}a_i,0,\dots,0)$ we have
\begin{equation}\label{Equation:Muir1}
\sum_{\sigma \in S_s} \prod_{i=1}^{s} d(x_{\sigma(i)})^{a_i}\leq (s-1)!\sum_{x\in \{x_1,\dots,x_s\} }  d(x)^{\sum_{i=1}^{s}a_i}.
\end{equation}
Moreover for the degrees of all vertices of $X$ the sequence $(b_1,b_2,\dots,b_{t},0,0\dots,0)$ is majorized by the sequence $(\sum_{i=j}^{t}b_j,0,\dots,0)$ we have 
\begin{equation}\label{Equation:Muir2}
\sum_{y_1,\dots,y_t\in X}\prod_{j=1}^{t} d(y_j)^{b_j}
        \leq \frac{(\abs{X}-1)!}{(\abs{X}-t)!} \sum_{y\in X}d(y)^{\sum_{j=1}^{t} b_j}. 
\end{equation}
Note that we have $\frac{(\abs{X}-1)!}{(\abs{X}-t)!}\leq \abs{X}^{t-1}\leq d(x)^{t-1}$ for all $x$ in $\{x_1,x_2,\dots,x_s\}$. 
Putting together the bounds~\eqref{Equation:H(G)bound},~\eqref{Equation:Muir1} and~\eqref{Equation:Muir2} we obtain
\begin{equation}\label{equation:H(G)bound}
    H^*(G)\leq  \sum_{x\in\{x_1,\dots,x_s\} \subset V(G)} (s-1)!  d(x)^{t-1+\sum_{i=1}^{s}a_i}    \sum_{y\in X}  d(y)^{\sum_{j=1}^{t} b_j} +o(n^{v(H)}) = F^*(G)+o(n^{v(H)}),
\end{equation}
where $F$ is a double-star with central vertices $v$ and $u$ joined by an edge, $\sum_{i=1}^s a_i+t-1$ pendant edges attached to $v$ and $\sum_{i=1}^t b_i+s-1$ pendant edges attached to $u$. 
Here we explain the last equality. 
Let us fix a set $S'$ in $V(F)$ containing the vertex $v$ and $s-1$ leaves adjacent with $u$. 
In order to find a copy of $F$ in $G$ first we choose vertices  $x_1,x_2,\dots,x_s$ of $G$, then we map vertices from $S'$ to it and choose representatives of all vertices adjacent to $v$ in $F$ except $u$. 
Then we fix a vertex $y$ representing $u$, and finally we choose the remaining leaves adjacent to it. 
   
    For a given $n$ and $F$, Gy\H ori, Wang and Woolfson~\cite{gyHori2021extremal} proved that there exists $n'$  such that for all triangle-free graphs $G$ on $n$ vertices we have $F(G)\leq F(K_{n',n-n'})+o(n^{v(F)})$.  
   Therefore we have $F^*(G)\leq F^*(K_{n',n-n'})+o(n^{v(F)})$. 
   Hence the maximum number of labeled copies of $H$ in $G$ is also asymptotically attained when $G=K_{n',n-n'}$ 
   \[
   H(G)\leq H(K_{n',n-n'})+o(n^{v(H)}). \qedhere
   \]
   \end{proof}

\section*{Acknowledgments}

We would like to thank Daniel Gerbner for some useful remarks on the manuscript. The research of Grzesik was supported by the National Science Centre grant 2021/42/E/ST1/00193.
The research of Gy\H{o}ri and Salia was supported by the National Research, Development and Innovation Office NKFIH, grants  K132696 and SNN-135643.  
The research of Tompkins was supported by NKFIH grant K135800. 
\bibliography{References.bib}

\end{document}